\newtheorem{theorem}{Theorem}[section]
\newtheorem{lemma}[theorem]{Lemma}
\theoremstyle{definition}
\newtheorem{definition}[theorem]{Definition}
\newtheorem{notation}{Notation}
\def\clo#1{\overline{#1}}
\def\text#1{\mbox{#1}}
\def\nullsp#1{\text{{\textit{null}}($#1$)}}
\newcommand{\ra}{\rangle}
\newcommand{\la}{\langle}
\newcommand{\Sop}{\mathcal{S}}
\newcommand{\Cop}{\mathcal{C}}
\newcommand{\Kop}{\mathcal{K}}
\newcommand{\Sph}{\mathbb{S}}
\newcommand{\Gpm}{(\partial \Omega \times \Sph)_{\pm}}
\newcommand{\Ws}{\mathbb{V}^{0}}
\newcommand{\Wss}{\mathbb{V}^{1}}
\newcommand{\Ts}{\mathbb{T}}
\def\Ls#1{\mathbb{W}^{0,#1}}
\def\Lss#1{\mathbb{W}^{1,#1}}
\def\LTs#1{\mathbb{T}^{0,#1}}
\title[Recovery of the absorption coefficient]{Recovery of the absorption coefficient in radiative transport from a single measurement}
\author[Sebastian Acosta]{}
\subjclass{Primary: 35R30, 35Q60, 35Q20; Secondary: 78A70.}
\keywords{Inverse problems, radiation transfer, neutron, optical imaging.}
 \email{sacosta@bcm.edu}
\thanks{This work was partially supported by AFOSR Grant
FA9550-12-1-0117 and ONR Grant N00014-12-1-0256}
\begin{document}

\maketitle

\centerline{\scshape Sebastian Acosta}

\medskip

{\footnotesize
 \centerline{Department of Pediatric Cardiology}
   \centerline{Baylor College of Medicine, Houston, TX, USA}
}

\bigskip

 \centerline{(Communicated by the associate editor name)}

\begin{abstract}
In this paper, we investigate the recovery of the absorption coefficient from boundary data assuming that the region of interest is illuminated at an initial time. We consider a sufficiently strong and isotropic, but otherwise \textit{unknown} initial state of radiation. This work is part of an effort to reconstruct optical properties using \textit{unknown} illumination embedded in the \textit{unknown} medium.

We break the problem into two steps. First, in a linear framework, we seek the simultaneous recovery of a forcing term of the form $\sigma(t,x,\theta) f(x)$ (with $\sigma$ known) and an \textit{isotropic} initial condition $u_{0}(x)$ using the \textit{single measurement} induced by these data. Based on exact boundary controllability, we derive a system of equations for the unknown terms $f$ and $u_{0}$. The system is shown to be Fredholm if $\sigma$ satisfies a certain positivity condition. We show that for generic term $\sigma$ and weakly absorbing media, this linear inverse problem is uniquely solvable with a stability estimate. In the second step, we use the stability results from the linear problem to address the nonlinearity in the recovery of a weak absorbing coefficient. We obtain a locally Lipschitz stability estimate.
\end{abstract}


\section{Introduction} \label{Section.Intro}

The radiative transport equation (RTE) models physical phenomena in various scientific disciplines including medical imaging, semiconductors, astrophysics, nuclear reactors, etc. The mathematical treatment of some of these applications is found in \cite{Cas-Zwe-1967,Dau-Lio-1993,Agoshkov-1998,Mok-1997,Anikonov-Book-2002,Cer-Gab-2007}. We are mainly motivated by medical imaging, and we refer the reader to \cite{Arr-1999,Gib-Heb-Arr-2005,Kim-Mos-2006,Bal-2009-Rev,Dur-Cho-Bak-Tod-2010,Ren-2010,Bal-Ren-2008,Bal-Pinaud-2007,Bal-Pinaud-2011,Arr-Schot-2009,Arr-2011} for descriptions of such problems. 

The transport of radiation is modeled by the following equations,
\begin{align}
\partial_{t} w + (\theta \cdot \nabla) w + \mu_{\rm a} w + \mu_{\rm s} (I - \Kop) w &= 0  \quad && \text{in $[0,\tau] \times (\Omega \times \Sph) $} \label{Eqn.Orig01} \\
w &= w_{0} \quad && \text{on $\{ t = 0 \} \times (\Omega \times \Sph)$}, \label{Eqn.Orig02}
\end{align}
augmented by a vanishing inflow profile. The optical properties of the medium are the absorption coefficient $\mu_{\rm a}$, the scattering coefficient $\mu_{\rm s}$ and the scattering operator $\Kop$.  The solution $w=w(t,x,\theta)$ represents the density of radiation at time $t \in [0,\tau]$, position $x \in \Omega \subset \mathbb{R}^n$, moving in the direction $\theta \in \Sph$ at unit speed.

An inverse problem for optical imaging consists of reconstructing one or more optical coefficients. In this paper, we focus on the recovery of the absorption coefficient $\mu_{\rm a}$ and we assume that the scattering properties of the medium are known. We let $\tilde{w}$ have vanishing inflow profile and be the solution of the following radiative transport problem with absorption coefficient $\tilde{\mu}_{\rm a}$ and initial condition $\tilde{w}_{0}$,
\begin{align}
\partial_{t} \tilde{w} + (\theta \cdot \nabla) \tilde{w} + \tilde{\mu}_{\rm a} \tilde{w} + \mu_{\rm s} (I - \Kop) \tilde{w} &= 0  \quad && \text{in $[0,\tau] \times (\Omega \times \Sph) $} \label{Eqn.Orig03} \\
\tilde{w} &= \tilde{w}_{0} \quad && \text{on $\{ t = 0 \} \times (\Omega \times \Sph)$}, \label{Eqn.Orig04}
\end{align}

Then the following functions
\begin{eqnarray}
u = w - \tilde{w}, \qquad
\sigma = \tilde{w}, \qquad
f = \mu_{\rm a} - \tilde{\mu}_{\rm a}  \qquad \text{and} \qquad
u_{0} = w_{0} - \tilde{w}_{0}  \label{Eqn.Diff}
\end{eqnarray}
satisfy the initial boundary value problem (\ref{Eqn.0005})-(\ref{Eqn.0007}) defined in Section \ref{Section:MainResults}. As usual, the system (\ref{Eqn.0005})-(\ref{Eqn.0007}) has the aspect of a linearized problem where $\mu_{\rm a}$ represents a reference absorption and we seek to reconstruct the contrast $f = \mu_{\rm a} - \tilde{\mu}_{\rm a}$ between the true and reference coefficients from knowledge of the boundary measurements of the field $u$.  

Some of the most fundamental results in the recovery of coefficients for the RTE are based on knowledge of the \textit{albedo} operator which maps \textit{all possible} inflow illuminations to corresponding outflow measurements. For reviews of these results see Stefanov \cite{Ste-InOut-2003}, Bal \cite{Bal-2009-Rev} and reference therein. In the present work, however, we focus on the recovery of a coefficient from a \textit{single measurement}. To the best of our knowledge, all works in the literature for similar single-measurement problems aim at the recovery of $f$ given full knowledge of the initial condition. Hence, it is usually assumed that $u_{0} = 0$. In this category of assumptions, we find the recent works of Klibanov--Pamyatnykh \cite{Kli-Pam-2008} and Machida--Yamamoto \cite{Mach-Yam-2013}. Both of them are based on ingenious Carleman estimates leading to uniqueness in the recovery of $f$ in \cite{Kli-Pam-2008}, and global Lipschitz stability in \cite{Mach-Yam-2013}. These estimates are patterned after similar results for the wave equation in the general form of \cite[Thm 8.2.2]{Isakov-1998}.
Unfortunately, assuming precise knowledge of a non-vanishing initial condition is not realistic in most applications. The main contribution of this paper is a strategy to partially bypass that assumption.

One of the standing questions in some optical imaging applications is the following: Can we use \textit{unknown} sources (such as ambient radiative noise or fortuitous radiance) to illuminate a region and recover its optical properties ? In this paper, the illuminating source is represented by the initial state of radiation $\sigma|_{t=0} = \tilde{w}|_{t=0}$, and we seek to show that $f=0$ if the boundary outflow is zero. The purpose of our work is to provide sufficient conditions for the unique and stable recovery of $f$ from knowledge of the outflow profile of radiation $u$ at the boundary of $\Omega$, but \textit{without} full knowledge of $u_{0}$. In plain words, our main result is that if the initial condition $u_{0}$ is \textit{isotropic} and the region $\Omega$ is properly illuminated, then both $f$ and $u_{0}$ can be reconstructed in a locally stable manner. The precise statements of our results are presented in Section \ref{Section:MainResults}. Our proof, found in Section \ref{Section:InverseProblem}, is primarily based on boundary controllability for the RTE recently obtained in \cite{Aco-2013,Kli-Yam-2007}.

 
\section{Background and statement of main results} \label{Section:MainResults}

In this section we state the direct problem for transient radiative transport and also the exact boundary controllability property. We also review some preliminary facts in order to state our main results in the proper mathematical ground. 

We assume that $\Omega \subset \mathbb{R}^n$ ($n \geq 2$) is a bounded convex domain with smooth boundary $\partial \Omega$. The unit sphere in $\mathbb{R}^n$ is denoted by $\Sph$. The outflow $(+)$ and inflow $(-)$ parts of the boundary are
\begin{eqnarray*}
\Gpm = \{ (x,\theta) \in \partial \Omega \times \Sph \, : \, \pm \, \nu(x) \cdot \theta > 0 \} 
\end{eqnarray*}
where $\nu$ is the outward unit normal vector on $\partial \Omega$. Without loss of generality, it is assumed that the particles travel at unit speed. The spatial scale of the problem is given by
\begin{eqnarray*}
l = \text{\rm diam}(\Omega).
\end{eqnarray*}

Now we define the appropriate Banach spaces over which the radiative transport problem is well-posed. First, we denote by $\Ls{p}$ and $\Lss{p}$, for $0 \leq p < \infty$, the completion of $C^{1}(\clo{\Omega} \times \Sph)$ with respect to the following norms,
\begin{eqnarray}
&& \| u \|_{\Ls{p}} =  \| u \|_{L^p(\Omega \times \Sph)} \\ \label{Eqn.146}
&& \| u \|_{\Lss{p}}^{p} = l^{p} \| \theta \cdot \nabla u \|_{\Ls{p}}^{p} + \| u \|_{\Ls{p}}^{p} +  l \| |\nu \cdot \theta|^{1/p} u \|_{L^p(\partial \Omega \times \Sph)}^{p}  \label{Eqn.148}
\end{eqnarray}
where $\theta \cdot \nabla$ denotes the weak directional derivative. These expressions adopt the usual meaning for $p=\infty$. Now, denote by $\LTs{p}$ the trace space defined as the completion of $C(\partial \Omega \times \Sph)$ with respect to the following norm,
\begin{eqnarray}
&& \| u \|_{\LTs{p}} = l \| |\nu \cdot \theta|^{1/p} u \|_{L^p(\partial \Omega \times \Sph)}. \label{Eqn.150}
\end{eqnarray}
We also have the spaces $\LTs{p}_{\pm}$ denoting the restriction of functions in $\LTs{p}$ to the in- and out-flow portions of the boundary $\partial \Omega \times \Sph$, respectively. Functions in $\Lss{p}$ have well-defined traces on the space $\LTs{p}$ as asserted by the following lemma whose proof is found in \cite{Egg-Sch-2014,Agoshkov-1998,Mok-1997,Ces-1985,Ces-1984}.

 
\begin{lemma} \label{Lemma.101}
The trace mapping $u \mapsto u|_{\partial \Omega}$ defined for $C^{1}(\clo{\Omega} \times \Sph)$ can be extended to a bounded operator $\gamma : \Lss{p} \to \LTs{p}$. Moreover, $\gamma : \Lss{p} \to \LTs{p}$ is surjective. The same claims hold for the partial trace maps $\gamma_{\pm} : \Lss{p} \to \LTs{p}_{\pm}$.
\end{lemma}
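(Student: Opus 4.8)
The first statement, and the boundedness asserted in the last one, are essentially built into the definitions. For $u\in C^{1}(\clo{\Omega}\times\Sph)$ the classical restriction $u|_{\partial\Omega}$ is continuous on $\partial\Omega\times\Sph$, and comparing the two norms one reads off at once that $\|u|_{\partial\Omega}\|_{\LTs{p}}\le C(l,p)\,\|u\|_{\Lss{p}}$, since, up to the factor $l^{p-1}$, the quantity $\|u|_{\partial\Omega}\|_{\LTs{p}}^{p}$ coincides with one of the nonnegative summands defining $\|u\|_{\Lss{p}}^{p}$. As $C^{1}(\clo{\Omega}\times\Sph)$ is dense in $\Lss{p}$ by construction, $\gamma$ extends uniquely to a bounded operator $\Lss{p}\to\LTs{p}$, and restricting the approximating sequences to $\Gin$, $\Gout$ gives the bounded partial traces $\gamma_{\pm}$. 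The only real content is the surjectivity, and the basic tool for it is a change of variables along characteristics (a Santal\'o-type identity): since $\Omega$ is bounded, convex and has smooth boundary, each $(x,\theta)\in\Gin$ has a well-defined forward exit time $\tau_{+}(x,\theta)\in(0,l]$ with $x+\tau_{+}(x,\theta)\theta\in\partial\Omega$; the exit-time functions $\tau_{\pm}$ and the endpoint maps $\pi_{\pm}$ (forward exit / backward entry) are Lipschitz on $\clo{\Omega}\times\Sph$; and for integrable $F$,
\[
\int_{\Omega\times\Sph}F\,dy\,d\theta=\int_{\Gin}\Big(\int_{0}^{\tau_{+}}F(x+s\theta,\theta)\,ds\Big)|\nu\cdot\theta|\,d\sigma\,d\theta ,
\]
together with the analogous identity parametrised over $\Gout$ and the ray run backwards.

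For the partial trace $\gamma_{-}$ I would produce an explicit bounded right inverse. Given $g\in C(\Gin)$, let $\mathcal{E}_{-}g$ solve the pure-transport problem $\theta\cdot\nabla u+u=0$ in $\Omega\times\Sph$ with inflow data $g$, that is, $(\mathcal{E}_{-}g)(y,\theta)=e^{-\tau_{-}(y,\theta)}g(\pi_{-}(y,\theta),\theta)$. Then $\theta\cdot\nabla(\mathcal{E}_{-}g)=-\mathcal{E}_{-}g\in\Ls{p}$, $\gamma_{-}\mathcal{E}_{-}g=g$, and the Santal\'o identity yields $\|\mathcal{E}_{-}g\|_{\Ls{p}}^{p}\le\int_{\Gin}\big(\int_{0}^{\tau_{+}}e^{-ps}\,ds\big)|g|^{p}|\nu\cdot\theta|\le p^{-1}l^{-p}\,\|g\|_{\LTs{p}_{-}}^{p}$, while the outflow trace $\gamma_{+}\mathcal{E}_{-}g$ equals $e^{-\tau}$ times the transported data and is controlled the same way. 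Hence, using precisely this estimate together with the density of $C(\Gin)$ in $\LTs{p}_{-}$, $\mathcal{E}_{-}$ extends to a bounded operator $\LTs{p}_{-}\to\Lss{p}$ with $\gamma_{-}\mathcal{E}_{-}=\mathrm{id}$, so $\gamma_{-}$ is onto; the reflection $\theta\mapsto-\theta$ gives $\mathcal{E}_{+}$ and surjectivity of $\gamma_{+}$.

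For the full trace $\gamma$ onto $\LTs{p}$ (identify $\LTs{p}$ with $\LTs{p}_{-}\oplus\LTs{p}_{+}$, the glancing set $\{\nu\cdot\theta=0\}$ being null) one cannot merely superpose the damped lifts above: travelling from $\Gin$ to $\Gout$ and back contracts data only by a factor $e^{-2\tau}$, which is not a genuine contraction near glancing rays, so the two traces of a single transport solution are essentially coupled there. Instead I would interpolate the transported boundary data along each characteristic: writing $\tau_{\pm}=\tau_{\pm}(y,\theta)$, $\pi_{\pm}=\pi_{\pm}(y,\theta)$ and $\tau=\tau_{+}+\tau_{-}$ for the chord length, set
\[
u(y,\theta)=\tfrac{\tau_{+}}{\tau}\,g_{-}(\pi_{-},\theta)+\tfrac{\tau_{-}}{\tau}\,g_{+}(\pi_{+},\theta),
\]
which is affine in arclength along each chord and equals $g_{-}$ at the entry point and $g_{+}$ at the exit point, so $\gamma_{\pm}u=g_{\pm}$. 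Because $\tau$ and the transported data are constant along characteristics, $\theta\cdot\nabla u=\big(g_{+}(\pi_{+},\cdot)-g_{-}(\pi_{-},\cdot)\big)/\tau$, and the Santal\'o identity reduces the target bound $\|u\|_{\Lss{p}}\le C\,\|(g_{-},g_{+})\|_{\LTs{p}}$ to an estimate of $\int_{\Gin}\tau^{1-p}|g_{\pm}|^{p}|\nu\cdot\theta|\,d\sigma\,d\theta$ against $\int_{\Gin}|g_{\pm}|^{p}|\nu\cdot\theta|\,d\sigma\,d\theta$. One would first carry this out on the dense subspace of data supported where $\tau>\varepsilon$ (the complement has $\LTs{p}$-mass tending to $0$, since a short chord forces $|\nu\cdot\theta|$ to be small at both endpoints), obtain an $\varepsilon$-dependent bound, and then pass to the limit.

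The step I expect to be the genuine obstacle is precisely the glancing set $\{\nu\cdot\theta=0\}$: there $\tau$ degenerates, the factor $\tau^{1-p}$ blows up, and the lift constructions lose uniform control, so one must balance --- quantitatively in terms of the smoothness and curvature of $\partial\Omega$ --- the decay of the data mass near the glancing set against the growth of the lifting constants as $\tau\to0$. This balance is delicate (favourable for $p$ near $1$, deteriorating as $p$ grows), so depending on the range of $p$ actually needed one should either restrict to that range or replace the crude interpolation lift by one better adapted to the glancing directions. Everything else --- the Santal\'o identity, the Lipschitz regularity of $\tau_{\pm}$ and $\pi_{\pm}$, and the density of smooth functions --- is routine.
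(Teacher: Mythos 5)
First, a point of reference: the paper does not prove this lemma at all — it is quoted with a pointer to the literature (Cessenat, Agoshkov, Mokhtar-Kharroubi, Egger--Schlottbom) — so there is no in-paper argument to compare yours against. Your treatment of boundedness (the trace term is, up to the factor $l^{p-1}$, one of the summands defining the $\Lss{p}$-norm) and of the surjectivity of the \emph{partial} traces $\gamma_{\pm}$ via the exponentially damped transport lift $(\mathcal{E}_{-}g)(y,\theta)=e^{-\tau_{-}}g(\pi_{-},\theta)$ combined with the Santal\'o identity is correct and is the standard route; it is also the only part of the surjectivity claim that the paper actually uses (e.g.\ to prescribe outflow controls in $\Ts_{+}$). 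One small inaccuracy: $\tau_{\pm}$ and $\pi_{\pm}$ are \emph{not} Lipschitz on $\clo{\Omega}\times\Sph$ — their gradients blow up at the glancing set even for a ball — only locally Lipschitz away from $\{\nu\cdot\theta=0\}$; this does not damage the partial-trace argument but should be stated correctly.

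The genuine gap is exactly where you suspect it, and it cannot be closed by a cleverer lift, because joint surjectivity of $\gamma=(\gamma_{-},\gamma_{+})$ onto all of $\LTs{p}\cong\LTs{p}_{-}\oplus\LTs{p}_{+}$ is obstructed. For any $u\in\Lss{p}$, the fundamental theorem of calculus along a.e.\ chord plus H\"older give the necessary condition
\[
\int_{\Gin}\tau^{1-p}\,\bigl|g_{+}(\pi_{+},\theta)-g_{-}\bigr|^{p}\,|\nu\cdot\theta|\,d\sigma\,d\theta\;\le\;\|\theta\cdot\nabla u\|_{\Ls{p}}^{p},
\]
with $\tau$ the chord length. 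Since $\tau\asymp|\nu\cdot\theta|$ for a smooth convex body (exactly $\tau=2R|\nu\cdot\theta|$ on a ball), the weight $\tau^{1-p}|\nu\cdot\theta|\asymp|\nu\cdot\theta|^{2-p}$ is not dominated by $|\nu\cdot\theta|$ near the glancing set once $p>1$: already for $p=2$ on a ball, the pair $g_{-}=|\nu\cdot\theta|^{-1/2}$, $g_{+}=0$ lies in $\LTs{2}$ but violates this condition, hence is not a joint trace. So only $\gamma_{\pm}$ are separately surjective; the range of the joint map is the strictly smaller set of pairs satisfying the above compatibility near $\{\nu\cdot\theta=0\}$ (this is precisely Cessenat's $\min(\tau,\cdot)$-weighted trace space phenomenon). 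Correspondingly, your fallback — an $\varepsilon$-dependent bound on data supported in $\{\tau>\varepsilon\}$ followed by "passing to the limit" — cannot work even in principle: without a uniform constant you only show that the range contains a dense subspace, whereas by the open mapping theorem a bounded linear surjection between Banach spaces must admit preimages of uniformly controlled norm, so density is strictly weaker than surjectivity. The repair is either to prove (and use) only the surjectivity of the partial traces, which your damped lift already delivers and which suffices for everything in this paper, or to replace $\LTs{p}$ by the correct joint trace space carrying the additional $\tau^{1-p}|g_{+}\circ\Phi-g_{-}|^{p}$ weight.
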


In addition, we have the following definition for traceless closed subspaces of $\Lss{p}$,
\begin{eqnarray}
\Lss{p}_{\pm} = \nullsp{\gamma_{\pm}} = \left\{ v \in \Lss{p} \, : \, \gamma_{\pm} v = 0 \right\}, \label{Eqn.003}
\end{eqnarray}
as well as the following integration-by-parts formula or Green's identity for functions $u,v \in \Lss{p}$ for $p \geq 2$,
\begin{eqnarray}
\int_{\Omega \times \Sph} \!\!\!\! (\theta \cdot \nabla u) v  = \int_{\partial \Omega \times \Sph} \!\!\!\! (\theta \cdot \nu) u v  - \int_{\Omega \times \Sph} \!\!\!\! (\theta \cdot \nabla v) u. \label{Eqn.Green}
\end{eqnarray}

\begin{notation}
In order to ease the notation, we define special symbols for the following Hilbert spaces $\Ws = \Ls{2}$, $\Wss = \Lss{2}$ and $\Ts = \LTs{2}$.
\end{notation}

Throughout the paper we will make the following assumptions concerning the absorption and scattering coefficients, and the scattering kernel. These assumptions allow heterogeneous media to be modeled by coefficients of low regularity. First, we have non-negative absorption $\mu_{\rm a} \in L^{\infty}(\Omega)$ and scattering $\mu_{\rm s} \in L^{\infty}(\Omega)$ coefficients. We denote their norms as follows,
\begin{eqnarray}
\clo{\mu}_{\rm a} = \| \mu_{\rm a} \|_{L^{\infty}(\Omega)} \qquad \text{and} \qquad \clo{\mu}_{\rm s} = \| \mu_{\rm s} \|_{L^{\infty}(\Omega)}.  \label{Eqn.AbsScaCoeff}
\end{eqnarray}
The scattering operator $\Kop : \Ls{p} \to \Ls{p}$ is given by
\begin{eqnarray}
 (\Kop u)(x,\theta) = \int_{\Sph} \kappa (x,\theta,\theta') u(x,\theta') \, d S(\theta'), \label{Eqn.009}
\end{eqnarray}
where the scattering kernel $0 \leq \kappa \in L^{\infty}(\Omega \times \Sph \times \Sph)$. It is assumed that the scattering operator is \textit{conservative} in the following sense,
\begin{eqnarray}
&& \int_{\Sph} \kappa(x,\theta,\theta') d S(\theta') = 1, \quad \text{for a.a. $(x,\theta) \in \Omega \times \Sph$}. \label{Eqn.011}
\end{eqnarray}
In addition, we assume a \textit{reciprocity condition} on the scattering kernel given by
\begin{eqnarray}
&& \kappa(x,\theta,\theta') = \kappa(x,-\theta' , -\theta ), \quad \text{for a.a. $(x,\theta,\theta') \in \Omega \times \Sph \times \Sph$}. \label{Eqn.013}
\end{eqnarray}
This means that the scattering events are reversible in a local sense at each point $x \in \Omega$. The direct problem is defined as follows.

\begin{definition}[Direct Problem] \label{Def.DirectProb2}
Given forcing terms
\begin{eqnarray}
f \in L^{\infty}(\Omega) \quad \text{and} \quad
 \sigma \in C^{1}([0,\tau]; L^{\infty}(\Omega \times \Sph)), \nonumber
\end{eqnarray}
and initial condition
\begin{eqnarray}
u_{0} \in \Lss{\infty}_{-}, \nonumber
\end{eqnarray}
find a solution $u \in C^{1}([0,\tau]; \Ls{\infty}) \cap C([0,\tau]; \Lss{\infty})$ to the following initial boundary value problem
\begin{align}
\dot{u} + (\theta \cdot \nabla) u + \mu_{\rm a} u + \mu_{\rm s} (I - \Kop) u &= \sigma f  \quad && \text{in $[0,\tau] \times (\Omega \times \Sph) $}, \label{Eqn.0005} \\
u &= u_{0} \quad && \text{on $\{ t = 0 \} \times (\Omega \times \Sph)$}, \label{Eqn.0006} \\
\gamma_{-} u &= 0 \quad && \text{on $[0,\tau] \times (\partial \Omega \times \Sph)_{-}$}. \label{Eqn.0007}
\end{align}
\end{definition}

From the well-posedness of the stationary radiative transport problem \cite{Egg-Sch-2014,Agoshkov-1998,Mok-1997} and semigroup theory \cite{Eng-Nag-2000,Pazy-1983}, we obtain that the above transient radiative transport problem for general heterogeneous, scattering media is well-posed.


\subsection{Tools from control theory} \label{Subsec:MainResultControl}

Here we proceed to define the boundary controllability for the RTE. This is the major tool to prove our main result. Hence, we consider the following \textit{adjoint} transport problem with prescribed \textit{outflow} data. Given $\eta \in L^2([0,\tau];\Ts_{+})$, find a mild solution $\psi \in  C([0,\tau]; \Ws)$ of the following problem
\begin{eqnarray}
\dot{\psi} + (\theta \cdot \nabla) \psi - \mu_{\rm a} \psi - \mu_{\rm s} (I - \Kop^{*}) \psi = 0 \quad &\text{in $[0,\tau] \times (\Omega \times \Sph)$}, \label{Eqn.001c} \\
\psi = 0 \quad &\text{on $\{ t = \tau \} \times (\Omega \times \Sph)$}, \label{Eqn.002c} \\
\gamma_{+} \psi = \eta \quad &\text{on $[0,\tau] \times (\partial \Omega \times \Sph)_{+}$}. \label{Eqn.003c}
\end{eqnarray}

Given arbitrary $\phi \in \Ws$, the goal of the control problem is to find an outflow control condition $\eta \in L^2([0,\tau];\Ts_{+})$ to drive the solution $\psi$ of (\ref{Eqn.001c})-(\ref{Eqn.003c}) from  $\psi(\tau) = 0$ to $\psi(0) = \phi$. The well-posedness of the control problem is described in the following theorem which is a direct consequence of \cite{Aco-2013}.

\begin{theorem} \label{Thm.MainControl2}
Assume that $l \overline{\mu}_{\rm s} \, e^{l ( \overline{\mu}_{\rm a} + \overline{\mu}_{\rm s} ) }   < e^{-1}$. Then there exists a steering time $\tau < \infty$ such that for any initial state $\phi \in \Ws$, there exists outflow control $\eta \in L^2([0,\tau];\Ts_{+})$ so that the mild solution $\psi \in C([0,\tau];\Ws)$ of the problem (\ref{Eqn.001c})-(\ref{Eqn.003c}) satisfies $\psi(0) = \phi$.
Among all such controls there exists $\eta_{\rm min}$, with minimum norm, which is uniquely determined by $\phi$ and satisfies the following stability condition
\begin{eqnarray*}
 \| \eta_{\rm min} \|_{L^{2}([0,\tau];\Ts_{+})} \leq C \| \phi \|_{\Ws}
\end{eqnarray*}
for some positive constant $C = C(\overline{\mu}_{a}, \overline{\mu}_{s}, l,\tau)$. 
\end{theorem}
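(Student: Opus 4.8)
The plan is to recognize the statement as the dual form of an exact boundary controllability property for the transport equation and to prove it by the \emph{Hilbert Uniqueness Method}, with the scattering term treated perturbatively against pure transport; this is the mechanism of \cite{Aco-2013}, and I outline it below. The first step is the duality identity: pair a mild solution $\psi$ of (\ref{Eqn.001c})--(\ref{Eqn.003c}) against the solution $z$ of the homogeneous direct problem, i.e.\ (\ref{Eqn.0005})--(\ref{Eqn.0007}) with $f\equiv 0$ and initial datum $z_{0}\in\Ws$. Multiplying the $z$-equation by $\psi$, integrating over $[0,\tau]\times(\Omega\times\Sph)$, integrating by parts in $t$ (using $\psi(\tau)=0$), applying Green's identity (\ref{Eqn.Green}) in $x$, and transposing the scattering operator (legitimate under the structural hypotheses (\ref{Eqn.011})--(\ref{Eqn.013})), all interior terms cancel against the adjoint equation and one is left with
\begin{eqnarray*}
\la z_{0},\psi(0)\ra_{\Ws} & = & \int_{0}^{\tau}\!\!\int_{\Gout}(\theta\cdot\nu)\,(\gamma_{+}z)\,\eta ,
\end{eqnarray*}
the inflow boundary contributing nothing because $\gamma_{-}z=0$. (This is first derived for smooth data and then extended by density, which is legitimate once one knows, from the standard transport energy identity, that $z_{0}\mapsto\gamma_{+}z$ is bounded from $\Ws$ into $L^{2}([0,\tau];\Ts_{+})$.)

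Next I would reformulate controllability via the HUM operator $\mathcal{L}:\Ws\to\Ws$, $\mathcal{L}z_{0}=\psi(0)$, where $\psi$ solves (\ref{Eqn.001c})--(\ref{Eqn.003c}) with $\eta:=\gamma_{+}z$. The displayed identity shows $\mathcal{L}$ is bounded, self-adjoint and nonnegative, with $\la\mathcal{L}z_{0},z_{0}\ra_{\Ws}$ equal --- up to the fixed normalization constants in the norms --- to $\|\gamma_{+}z\|^{2}_{L^{2}([0,\tau];\Ts_{+})}$. Hence the control problem is solvable for every $\phi\in\Ws$ iff $\mathcal{L}$ is surjective, iff the observability estimate $\la\mathcal{L}z_{0},z_{0}\ra_{\Ws}\geq c\,\|z_{0}\|^{2}_{\Ws}$ holds; and in that case $\mathcal{L}$ is boundedly invertible, the control of least norm is $\eta_{\rm min}=\gamma_{+}z$ with $z_{0}=\mathcal{L}^{-1}\phi$ --- uniquely determined because the set of admissible controls is a nonempty closed affine subset of the Hilbert space $L^{2}([0,\tau];\Ts_{+})$ --- and $\|\eta_{\rm min}\|^{2}_{L^{2}([0,\tau];\Ts_{+})}=\la\phi,\mathcal{L}^{-1}\phi\ra_{\Ws}\leq c^{-1}\|\phi\|^{2}_{\Ws}$, which is the asserted stability estimate with $C=c^{-1/2}$.

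It then remains to establish the observability estimate, and the idea is to perturb off the scattering-free case $\mu_{\rm s}=0$. For $\mu_{\rm s}=0$ the homogeneous direct problem is integrated explicitly along the characteristics $s\mapsto(s,x+s\theta,\theta)$; by convexity every chord of $\Omega$ has length at most $l$, so with steering time $\tau=l$ the inflow-free solution is carried out through $\Gout$ before time $\tau$ and $|z_{0}(x,\theta)|=|\gamma_{+}z|\,\exp\!\big(\int_{0}^{s_{+}(x,\theta)}\mu_{\rm a}\big)$ along each characteristic, $s_{+}$ being the forward exit length. Changing variables from $(x,\theta)\in\Omega\times\Sph$ to the exit data on $\Gout$ --- whose Jacobian is precisely the weight $|\nu\cdot\theta|$ built into $\Ts_{+}$ --- converts this into the two-sided bound $\|\gamma_{+}z\|_{L^{2}([0,\tau];\Ts_{+})}\asymp\|z_{0}\|_{\Ws}$, with ratio controlled by $e^{l\overline{\mu}_{\rm a}}$ and powers of $l$; equivalently $\mathcal{L}_{0}$ (the $\mu_{\rm s}=0$ operator) is boundedly invertible, the associated control being the explicit function obtained by reading the last relation backwards. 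To reinstate scattering, rewrite (\ref{Eqn.001c}) as $\dot\psi+(\theta\cdot\nabla)\psi-(\mu_{\rm a}+\mu_{\rm s})\psi=-\mu_{\rm s}\Kop^{*}\psi$ and apply Duhamel, turning (\ref{Eqn.001c})--(\ref{Eqn.003c}) into a fixed-point equation $\psi=\Psi_{\eta}+\mathcal{R}\psi$, where $\Psi_{\eta}$ is the scattering-free solution carrying the outflow data $\eta$ and $\mathcal{R}$ is the scattering-free solution operator with zero outflow data composed with multiplication by $-\mu_{\rm s}\Kop^{*}$. Integrating along characteristics of length at most $l$ and using $\|\Kop^{*}\|\leq 1$ (from (\ref{Eqn.011})--(\ref{Eqn.013})) together with the integrating-factor bound $e^{l(\overline{\mu}_{\rm a}+\overline{\mu}_{\rm s})}$ yields $\|\mathcal{R}\|\leq l\,\overline{\mu}_{\rm s}\,e^{l(\overline{\mu}_{\rm a}+\overline{\mu}_{\rm s})}$; this is the contraction ratio of the iteration in \cite{Aco-2013}, and the hypothesis $l\,\overline{\mu}_{\rm s}\,e^{l(\overline{\mu}_{\rm a}+\overline{\mu}_{\rm s})}<e^{-1}$ is exactly the threshold under which that iteration, and the accompanying bookkeeping of constants, closes. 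Then $(I-\mathcal{R})^{-1}$ exists as a convergent Neumann series, $\eta\mapsto\psi(0)$ is a controlled perturbation of the $\mu_{\rm s}=0$ map, $\mathcal{L}=\mathcal{L}_{0}+(\text{small})$ stays boundedly invertible, and the resulting bound on $\|\mathcal{L}^{-1}\|$ --- hence on $C$ --- depends only on $\overline{\mu}_{\rm a},\overline{\mu}_{\rm s},l,\tau$.

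The delicate point --- and the one I would expect to cost the most work --- is this last perturbation step: the scattering-free problem is a transparent computation along characteristics, whereas keeping $\mathcal{L}$ invertible and controlling $\|\mathcal{L}^{-1}\|$ uniformly is where the smallness condition is genuinely consumed and where the explicit constant $e^{-1}$ enters. A secondary technicality, also handled in \cite{Aco-2013}, is the trace (``hidden regularity'') theory needed to give meaning to $\gamma_{+}\psi$ for the merely mild solutions $\psi\in C([0,\tau];\Ws)$ and to place it in $L^{2}([0,\tau];\Ts_{+})$.
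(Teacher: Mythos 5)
Your proposal is sound in outline, but note first that the paper itself offers no proof of this theorem: it is stated as ``a direct consequence of \cite{Aco-2013}'', so what you have written is a reconstruction of that external argument rather than something to be checked against an in-paper proof. Your reconstruction takes a genuinely different route from the one the paper actually leans on. You argue by duality and the Hilbert Uniqueness Method: the pairing identity $\la z_{0},\psi(0)\ra_{\Ws}=\int_0^\tau\int_{\Gout}(\theta\cdot\nu)(\gamma_{+}z)\,\eta$ is correct (it is the same integration by parts the paper performs in the proof of Theorem 2.2), the equivalence of exact controllability with coercivity of the Gramian $\mathcal{L}$ is standard, the explicit characteristics computation giving two\-/sided observability for $\mu_{\rm s}=0$ with $\tau=l$ is right (the weight $|\nu\cdot\theta|$ in $\Ts_{+}$ is exactly the Jacobian you need), and the characterization of $\eta_{\rm min}$ as the unique minimal element of a nonempty closed affine set is the cleanest way to get the uniqueness and stability claims. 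By contrast, the construction the paper imports from \cite{Aco-2013} is a direct time\-/reversal one: the control operator $\Cop$ is obtained as ``the bounded inverse of a certain composition of semigroups'', i.e.\ one inverts the control\-/to\-/state map itself by a Neumann series rather than the self\-/adjoint observability Gramian. Your HUM version buys the minimum\-/norm characterization for free; the paper's constructive version is what makes the later perturbation argument in the proof of Theorem 2.3 (Lipschitz dependence of $\Cop$ on $\mu_{\rm a}$ via perturbation of the underlying semigroups) go through verbatim. The one soft spot in your write\-/up is the constant $e^{-1}$: a bound $\|\mathcal{R}\|\le l\,\overline{\mu}_{\rm s}e^{l(\overline{\mu}_{\rm a}+\overline{\mu}_{\rm s})}$ would only require this quantity to be $<1$ for the Neumann series, so the stricter threshold must be consumed in comparing the scattering perturbation against the lower bound $\|\mathcal{L}_{0}^{-1}\|^{-1}$ (which itself degrades like $e^{-2l\overline{\mu}_{\rm a}}$); you acknowledge this as ``bookkeeping'' inherited from \cite{Aco-2013} rather than deriving it, which is acceptable for a cited result but is the step you would actually have to carry out to make the sketch self\-/contained.
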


Hence, the mapping $\phi \mapsto \eta_{\rm min}$ described in Theorem \ref{Thm.MainControl2} defines a bounded \textit{control operator},
\begin{eqnarray}
\Cop : \Ws \to L^{2}([0,\tau];\Ts_{+}) \label{Eqn.ControlOp}.
\end{eqnarray}

We also define a bounded \textit{solution operator}
\begin{eqnarray}
\Sop : \Ws \to L^{2}([0,\tau];\Ws) \label{Eqn.SolutionOp}
\end{eqnarray}
mapping $\phi \mapsto \psi$ where $\psi$ is the solution of (\ref{Eqn.001c})-(\ref{Eqn.003c}) with $\eta = \Cop \phi$.

We wish to point out that the condition $l \overline{\mu}_{\rm s} \, e^{l ( \overline{\mu}_{\rm a} + \overline{\mu}_{\rm s} ) }   < e^{-1}$ can be avoided using the control result of Klibanov--Yamamoto \cite{Kli-Yam-2007} if we assume sufficiently regular coefficients $\mu_{\rm a}$, $\mu_{\rm s}$ and $\kappa$.


\subsection{Main result for the inverse problem} \label{Subsec:MainResultInv}

Now we state the inverse problem for transient transport along with our main results. Our proof, presented in Section \ref{Section:InverseProblem}, is based on tools from control theory developed in \cite{Aco-2013,Kli-Yam-2007}. We break the problem into two steps. First, we study a linear problem for the simultaneous recovery of the forcing term $f \sigma$ (with $\sigma$ known) and the isotropic initial condition $u_{0}$. This linear problem can be reduced to a generically solvable Fredholm system of equations. In the second step, we use the linear results to address the nonlinear inverse problem for the recovery of the absorption coefficient. 

If we momentarily fix the source term $\sigma$, we may see the solution $u$ of the direct problem \ref{Def.DirectProb2} as dependent on the other source term $f \in L^{2}(\Omega)$ and the initial condition $u_{0} \in  \Wss_{-}$. The outflowing boundary measurements are modeled by the operator $\Lambda : L^{2}(\Omega) \times \Wss_{-} \to L^{2}( [0,\tau]; \Ts_{+})$ defined as
\begin{eqnarray}
\Lambda (f,u_{0}) = \gamma_{+} u, \label{Eqn.0630}
\end{eqnarray}
where $\gamma_{+} : \Wss \to \Ts_{+}$ is the outflowing trace operator defined in lemma \ref{Lemma.101}, and $u$ is the solution to the direct problem \ref{Def.DirectProb2}.

Notice that $\sigma f \in C^{1}([0,\tau];\Ws)$ which implies that $\dot{u} \in C([0,\tau];\Ws)$ is the mild solution of the same transport equation with $\dot{\sigma} f \in C([0,\tau];\Ws)$ as the forcing term and an initial condition in $\Ws$. For the existence of mild solutions in semigroup theory, see the standard references \cite{Eng-Nag-2000,Pazy-1983}. Now, using the concept of generalized traces for mild solutions, we can show that the measurement mapping $\Lambda : L^{2}(\Omega) \times \Wss_{-} \to H^{1}( [0,\tau]; \Ts_{+})$ is (extends as) a bounded operator. The treatment of generalized traces for mild solutions can be found in 
\cite[Section 2]{Kli-Yam-2007}, \cite[Section 14.4]{Mok-1997} or Cessenat \cite{Ces-1984,Ces-1985}.

With this notation we define both the linear and nonlinear inverse problems as follows.

\begin{definition}[Linear Inverse Problem] \label{Def.InvProb}
Let $u$ be the solution to the direct problem \ref{Def.DirectProb2} for unknown source term $f$ and unknown initial condition $u_{0}$. The inverse source problem is, given the outflowing measurement $\Lambda (f,u_{0})$, find $f$ and $u_{0}$.
\end{definition}

\begin{definition}[Nonlinear Inverse Problem] \label{Def.InvProbNonlinear}
Let $w$ and $\tilde{w}$ be the solutions to the direct problems (\ref{Eqn.Orig01})-(\ref{Eqn.Orig02}) and (\ref{Eqn.Orig03})-(\ref{Eqn.Orig04}), respectively. Show that if $\gamma_{+}w = \gamma_{+}\tilde{w}$, then $\mu_{\rm a} = \tilde{\mu}_{\rm a}$ and $w_{0} = \tilde{w}_{0}$. 
\end{definition}

In order to state and prove our main results, we define the following angular-averaging operator $P_{\theta} : \Ws \to L^2(\Omega)$ given by
\begin{eqnarray}
(P_{\theta} v)(x) = \frac{1}{|\Sph|} \int_{\Sph} v(x,\theta) \, dS (\theta),  \label{Eqn.AngAvg}
\end{eqnarray}
where $|\Sph|$ is the surface area of the unit sphere $\Sph = \{ x \in \mathbb{R}^n \, : \, |x| = 1 \}$. The well-known velocity averaging lemmas developed in \cite{Gol-Lio-Per-Sen-1988} imply the compactness of this operator when defined as $P_{\theta} : \Wss \to L^{2}(\Omega)$. We also define a time integral operator $P_{t} : L^{2}([0,\tau];\Ws) \to \Ws$ as follows,
\begin{eqnarray}
(P_{t} v)(x,\theta) = \int_{0}^{\tau} v(t,x,\theta) \, dt,  \label{Eqn.TimeAvg}
\end{eqnarray}
which is clearly bounded.

Our main result concerning the linear inverse problem is the following.

\begin{theorem}[Main Result 1] \label{Thm.MainInv}
Let $\tau < \infty$ be the steering time for exact controllability from Theorem \ref{Thm.MainControl2}. If 
\begin{itemize}
\item[(i)] there exists a constant $\delta > 0$ such that $|(P_{\theta} \sigma)(0,x)| \geq \delta$ for a.a. $x \in \Omega$,
\item[(ii)] $\sigma \in C^{1}([0,\tau];\Ls{\infty})$, and
\item[(iii)] the unknown initial condition is isotropic, ie., $u_{0} = P_{\theta} u_{0}$,
\end{itemize}
then the linear inverse problem \ref{Def.InvProb} for $(f,u_{0})$ can be reduced to the following Fredholm system on $L^{2}(\Omega) \times L^{2}(\Omega)$,
\begin{eqnarray} \left[
\begin{array}{cc}
(P_{\theta} \sigma_{0}) + (P_{\theta} P_{t} \dot{\sigma} \Sop)^{*} & \mu_{\rm a} \\ 
(P_{\theta} P_{t} \sigma \Sop)^{*} & I
\end{array} \right]
 \left[ 
\begin{array}{c}
f \\ 
u_{0}
\end{array} \right] =  \left[ 
\begin{array}{c}
P_{\theta} \, \Cop^{*} \dot{m} \\ 
P_{\theta} \, \Cop^{*} m
\end{array} \right] \label{Eqn.MainSys}
\end{eqnarray}
where $ m = \Lambda(f,u_{0})$, $\sigma_{0} = \sigma |_{t=0}$, and $P_{\theta} P_{t} \dot{\sigma} \Sop$ and $P_{\theta} P_{t} \sigma \Sop$ are compact operators.

Moreover, there exists an open and dense subset $\mathcal{U}$ of $(i) \cap (ii)$ such that for each $\sigma$ in this set, if $\clo{\mu}_{\rm a}$ is sufficiently small, then (\ref{Eqn.MainSys}) is uniquely solvable and the following stability estimate
\begin{eqnarray}
\| f \|_{L^{2}(\Omega)} + \| u_{0} \|_{L^{2}(\Omega)} \leq C \| \Lambda(f,u_{0}) \|_{H^{1}([0,\tau];\Ts_{+})} \label{Eqn.Stability}
\end{eqnarray}
holds for some positive constant $C=C(\mu_{\rm a},\mu_{\rm s}, l, \tau, \sigma)$.
\end{theorem}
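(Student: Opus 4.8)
The plan is to test the direct problem against the family of adjoint states produced by Theorem~\ref{Thm.MainControl2}, thereby turning the single boundary measurement into a closed $2\times2$ operator system, and then to analyse that system by Fredholm theory together with a perturbation argument in $\overline{\mu}_{\rm a}$ and a genericity argument in $\sigma$. Let $u$ solve (\ref{Eqn.0005})--(\ref{Eqn.0007}) and, for arbitrary $\phi\in\Ws$, let $\psi=\Sop\phi$ be the mild solution of the adjoint problem (\ref{Eqn.001c})--(\ref{Eqn.003c}) with control $\eta=\Cop\phi$, so that $\psi(\tau)=0$ and $\psi(0)=\phi$. Pairing (\ref{Eqn.0005}) with $\psi$ over $[0,\tau]\times(\Omega\times\Sph)$, integrating by parts in $t$ (using $\psi(\tau)=0$) and in $x$ by means of (\ref{Eqn.Green}) (the inflow boundary term vanishing because $\gamma_{-}u=0$), and using the adjoint equation to cancel the interior terms, one obtains---rigorously after approximation by $C^{1}$ data and passage to the limit via the generalized-trace calculus for mild solutions of \cite{Ces-1984,Ces-1985,Mok-1997,Kli-Yam-2007}---the duality identity
\[
\int_{0}^{\tau}\!\!\int_{\Omega\times\Sph}\!\!\sigma f\,\psi \;+\; \int_{\Omega\times\Sph}\!\! u_{0}\,\phi \;=\; \langle \Cop^{*}m,\phi\rangle_{\Ws},\qquad m=\Lambda(f,u_{0}).
\]
Restricting $\phi$ to isotropic functions and recalling hypothesis (iii), $u_{0}=P_{\theta}u_{0}$, yields the second row of (\ref{Eqn.MainSys}).

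For the first row I would differentiate in time. Since $\sigma f\in C^{1}([0,\tau];\Ws)$, the field $v=\dot u$ is the mild solution of the same transport equation with source $\dot\sigma f$, zero inflow, boundary measurement $\dot m$, and initial value read off from (\ref{Eqn.0005}) at $t=0$; because conservativity (\ref{Eqn.011}) makes $\Kop$ fix isotropic functions, $v(0)=\sigma_{0}f-(\theta\cdot\nabla)u_{0}-\mu_{\rm a}u_{0}$. Applying the duality identity to $v$ and again testing against isotropic $\phi$---for which $\langle(\theta\cdot\nabla)u_{0},\phi\rangle=0$ because $\int_{\Sph}\theta\,dS=0$ and $u_{0}=u_{0}(x)$---produces the first row of (\ref{Eqn.MainSys}), the summand $(P_{\theta}\sigma_{0})f$ arising from $\langle\sigma_{0}f,\phi\rangle$ and the compact summand $(P_{\theta}P_{t}\dot\sigma\Sop)^{*}f$ from $\langle\dot\sigma f,\psi\rangle$. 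To prove the compactness of $P_{\theta}P_{t}\sigma\Sop$ and $P_{\theta}P_{t}\dot\sigma\Sop$ I would factor them through $P_{\theta}:\Wss\to L^{2}(\Omega)$, compact by the velocity-averaging lemma \cite{Gol-Lio-Per-Sen-1988}: for $\psi=\Sop\phi$ the quantity $\int_{0}^{\tau}\sigma(t)\psi(t)\,dt$ has a directional derivative bounded by $\|\phi\|_{\Ws}$, obtained by substituting $\theta\cdot\nabla\psi=-\dot\psi+\mu_{\rm a}\psi+\mu_{\rm s}(I-\Kop^{*})\psi$ from the adjoint equation and integrating the $\dot\psi$-term by parts in time against $\sigma$---here hypothesis (ii) is essential, the boundary contributions being $\sigma(\tau)\psi(\tau)=0$ and $\sigma_{0}\phi$---while the remaining parts of the $\Wss$-norm are controlled trivially and by the trace estimates for $\psi$. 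Adjoints of compact operators being compact, the operator of (\ref{Eqn.MainSys}) decomposes as
\[
\begin{bmatrix} P_{\theta}\sigma_{0} & \mu_{\rm a}\\ 0 & I\end{bmatrix}\;+\;(\text{compact}),
\]
and under hypothesis (i), with $P_{\theta}\sigma_{0}\in L^{\infty}(\Omega)$ by (ii), the first, triangular matrix is boundedly invertible on $L^{2}(\Omega)\times L^{2}(\Omega)$; hence (\ref{Eqn.MainSys}) is Fredholm of index zero.

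For unique solvability I would eliminate $u_{0}$ via the second row, reducing (\ref{Eqn.MainSys}) to the scalar second-kind equation
\[
\big[\,P_{\theta}\sigma_{0}+(P_{\theta}P_{t}\dot\sigma\Sop)^{*}-\mu_{\rm a}(P_{\theta}P_{t}\sigma\Sop)^{*}\,\big]f=P_{\theta}\Cop^{*}\dot m-\mu_{\rm a}P_{\theta}\Cop^{*}m .
\]
Let $\mathcal{U}$ be the set of $\sigma$ in $(i)\cap(ii)$ for which $P_{\theta}\sigma_{0}+(P_{\theta}P_{t}\dot\sigma\Sop)^{*}$, with $\Sop$ evaluated at $\mu_{\rm a}=0$, is invertible on $L^{2}(\Omega)$. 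This set is open, since invertibility is an open condition and $\sigma\mapsto P_{\theta}\sigma_{0}+(P_{\theta}P_{t}\dot\sigma\Sop)^{*}$ is continuous from $C^{1}([0,\tau];\Ls{\infty})$ into the bounded operators on $L^{2}(\Omega)$; and it is dense, since replacing $\sigma$ by $\sigma+c$ for a large constant $c$ leaves the compact summand unchanged---it depends on $\sigma$ only through $\dot\sigma$---while shifting $P_{\theta}\sigma_{0}$ to $P_{\theta}\sigma_{0}+c$, so multiplication by $P_{\theta}\sigma_{0}+c$ dominates the compact remainder and $\sigma+c$ still lies in $(i)\cap(ii)$. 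For $\sigma\in\mathcal{U}$, continuous dependence of $\Sop$ and $\Cop$ on the coefficients together with $\|\mu_{\rm a}(P_{\theta}P_{t}\sigma\Sop)^{*}\|\le\overline{\mu}_{\rm a}\,\|(P_{\theta}P_{t}\sigma\Sop)^{*}\|$ shows that once $\overline{\mu}_{\rm a}$ is small the reduced operator is a norm-small perturbation of its invertible value at $\mu_{\rm a}=0$, hence invertible.

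Then $u_{0}$ is recovered from the second row, so (\ref{Eqn.MainSys}) is uniquely solvable, and
\[
\|f\|_{L^{2}(\Omega)}+\|u_{0}\|_{L^{2}(\Omega)}\le C\big(\|P_{\theta}\Cop^{*}\dot m\|_{L^{2}(\Omega)}+\|P_{\theta}\Cop^{*}m\|_{L^{2}(\Omega)}\big)\le C'\,\|m\|_{H^{1}([0,\tau];\Ts_{+})},
\]
which is (\ref{Eqn.Stability}); uniqueness is the special case $m=0$. I expect the compactness step to be the principal obstacle: it relies on hidden directional-derivative regularity of $P_{t}\sigma\Sop\phi$ that is absent for a generic mild solution and must be extracted from the transport equation by trading $\partial_{t}$ for transport and integrating by parts in time---the sole reason hypothesis (ii) is imposed---after which the averaging lemma applies because only the angular average enters the system. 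The perturbation argument in $\overline{\mu}_{\rm a}$ and the duality calculus are otherwise routine, modulo careful bookkeeping of generalized traces for mild solutions.
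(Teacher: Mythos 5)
Your overall architecture matches the paper's: test the state against the controlled adjoint family $\psi=\Sop\phi$, $\eta=\Cop\phi$, restrict to isotropic $\phi$, and read off a two-by-two system of the form ``invertible triangular plus compact.'' Your route to the first row --- differentiating the state in time and reapplying the duality identity to $v=\dot u$ with initial datum $\sigma_0 f-Au_0$ --- is a legitimate variant of the paper's pairing against $\dot\psi$, and is in fact cleaner, since it avoids the temporal boundary term $\la m,\eta\ra\big|_0^\tau$ that the latter produces; note that it yields $-\mu_{\rm a}u_0$ where the paper's system (\ref{Eqn.MainSys}) has $+\mu_{\rm a}u_0$, a sign that is immaterial to the Fredholm structure and to every subsequent conclusion. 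Your compactness argument is the paper's Lemma \ref{Lemma.005} in sketch form (trade $\theta\cdot\nabla$ for $\partial_t$ through the adjoint equation, integrate by parts in time, invoke velocity averaging); you should still record that $(\theta\cdot\nabla)\sigma$ is not controlled by hypothesis (ii), so one must first take $\sigma$ spatially smooth and then pass to general $\sigma$ using the estimate $\|P_\theta P_t(\sigma_1-\sigma_2)\Sop\|\le C\|\sigma_1-\sigma_2\|_{C([0,\tau];\Ls{\infty})}$ and the closedness of the compact operators.

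The genuine gap is in the density of $\mathcal{U}$. Replacing $\sigma$ by $\sigma+c$ with $c$ \emph{large} is not a small perturbation in $C^1([0,\tau];\Ls{\infty})$: your argument shows that $\mathcal{U}$ is nonempty, not that its points accumulate at an arbitrary $\sigma$ satisfying (i)--(ii), which is what density requires. Some meromorphic-continuation step is needed to convert ``invertible somewhere on a one-parameter family'' into ``invertible arbitrarily close to the given $\sigma$.'' The paper does this with the analytic Fredholm theorem along the segment $\rho(\lambda)=\lambda\sigma+(1-\lambda)\sigma_0$: the non-compact part of the operator is independent of $\lambda$ and invertible by (i), the compact part is affine in $\lambda$, invertibility holds at $\lambda=0$ (where $\dot\rho=0$) for small $\clo{\mu}_{\rm a}$, hence it holds for all but a discrete set of $\lambda$ and in particular for $\lambda$ arbitrarily close to $1$, i.e.\ for $\rho(\lambda)$ arbitrarily close to $\sigma$. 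Your constant-shift family can be repaired the same way: $c\mapsto(P_\theta\sigma_0+c)+K$, with $K$ independent of $c$, is analytic on the connected open set of $c\in\mathbb{C}$ for which $P_\theta\sigma_0+c$ is essentially bounded away from zero (a set containing $c=0$ by (i) and all large real $c$), and invertibility for large $c$ then propagates to all but discretely many $c$, in particular to $c$ arbitrarily close to $0$. As written, however, the density claim --- and with it the ``generic $\sigma$'' part of the theorem --- is unproved.
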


We wish to use Theorem \ref{Thm.MainInv} to analyze the nonlinear inverse problem \ref{Def.InvProbNonlinear}. The challenge is that the constant of stability $C$ depends on $\mu_{\rm a}$ directly in the upper-right entry in the operator-matrix and indirectly through the operator $\Sop$. There is also dependence on $\tilde{\mu}_{\rm a}$ and $\tilde{w}_{0}$ through $\sigma$. This is a manifestation of the nonlinearity in the recovery of the absorption coefficient. Therefore, in order to address the nonlinear inverse problem \ref{Def.InvProbNonlinear}, we need to determine the dependence of the constant $C$ with respect to changes in $\mu_{\rm a}$, $\tilde{\mu}_{\rm a}$, and $\tilde{w}_{0}$. Our main result for the nonlinear inverse problem is the following.

\begin{theorem}[Main Result 2] \label{Thm.MainInv2}
Concerning the nonlinear inverse problem \ref{Def.InvProbNonlinear},
if both $w_{0}$ and $\tilde{w}_{0}$ are isotropic and either $w$ or $\tilde{w}$ belong to the generic set $\mathcal{U}$ (defined in  Theorem \ref{Thm.MainInv}), then the nonlinear inverse problem is uniquely solvable and the following locally Lipschitz stability estimate,
\begin{eqnarray}
\| \mu_{\rm a} - \tilde{\mu}_{\rm a} \|_{L^{2}(\Omega)} + \| w_{0} - \tilde{w}_{0} \|_{L^{2}(\Omega)} \leq C \| \gamma_{+}w - \gamma_{+}\tilde{w} \|_{H^{1}([0,\tau];\Ts_{+})} \label{Eqn.Stability2}
\end{eqnarray}
holds for a positive constant $C$ that remains uniformly bounded for $\mu_{\rm a}, \tilde{\mu}_{\rm a}$ in a small neighborhood of zero in $L^{\infty}(\Omega)$ and for $w_{0}, \tilde{w}_{0}$ in a small fixed neighborhood of $P_{\theta} \Lss{\infty}_{-}$.
\end{theorem}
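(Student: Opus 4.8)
The plan is to bootstrap the linear stability estimate~(\ref{Eqn.Stability}) into a statement about the genuinely nonlinear map $(\mu_{\rm a},w_0)\mapsto \gamma_+w$, tracking carefully how the constant $C$ in Theorem~\ref{Thm.MainInv} depends on the data. Recall from~(\ref{Eqn.Diff}) that with $u=w-\tilde w$, $\sigma=\tilde w$, $f=\mu_{\rm a}-\tilde\mu_{\rm a}$ and $u_0=w_0-\tilde w_0$, the pair $(f,u_0)$ solves the direct problem of Definition~\ref{Def.DirectProb2} with reference absorption $\mu_{\rm a}$ and source profile $\sigma=\tilde w$, and with $\Lambda(f,u_0)=\gamma_+w-\gamma_+\tilde w$. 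By the hypothesis that $w_0,\tilde w_0$ are isotropic, $u_0=P_\theta u_0$, so condition~(iii) of Theorem~\ref{Thm.MainInv} holds. If $\tilde w$ (resp.\ $w$) lies in the generic set $\mathcal U$, then $\sigma=\tilde w$ satisfies~(i) and~(ii); for the symmetric case one instead writes the difference with $w$ playing the role of the ``reference'' solution, which is permissible since the linear system~(\ref{Eqn.MainSys}) is symmetric in that interchange up to sign. Thus Theorem~\ref{Thm.MainInv} applies and yields~(\ref{Eqn.Stability2}) \emph{for a fixed} $C=C(\mu_{\rm a},\mu_{\rm s},l,\tau,\tilde w)$; uniqueness ($\mu_{\rm a}=\tilde\mu_{\rm a}$, $w_0=\tilde w_0$ whenever $\gamma_+w=\gamma_+\tilde w$) is immediate from the unique solvability of~(\ref{Eqn.MainSys}).

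The substantive work is the \emph{uniformity} of $C$ as $\mu_{\rm a},\tilde\mu_{\rm a}$ range over a small neighborhood of $0$ in $L^\infty(\Omega)$ and $w_0,\tilde w_0$ over a small neighborhood of $P_\theta\Lss{\infty}_-$. First I would show that the solution operator $\Sop$ and the control operator $\Cop$ from Theorem~\ref{Thm.MainControl2}, together with the steering time $\tau$, can be chosen uniformly: the smallness condition $l\overline\mu_{\rm s}e^{l(\overline\mu_{\rm a}+\overline\mu_{\rm s})}<e^{-1}$ is an open condition, stable under small perturbations of $\overline\mu_{\rm a}$ near $0$, and the constant $C$ in Theorem~\ref{Thm.MainControl2} depends continuously (indeed monotonically) on $\overline\mu_{\rm a}$; hence $\|\Sop\|$, $\|\Cop\|$ are bounded uniformly on the neighborhood. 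Next, $\sigma=\tilde w$ depends on $(\tilde\mu_{\rm a},\tilde w_0)$ through the forward problem~(\ref{Eqn.Orig03})--(\ref{Eqn.Orig04}); by well-posedness of the direct problem in $C^1([0,\tau];\Ls\infty)\cap C([0,\tau];\Lss\infty)$ and a Grönwall-type estimate, the map $(\tilde\mu_{\rm a},\tilde w_0)\mapsto\tilde w$ is Lipschitz into that space on the relevant neighborhood, so $\delta$ in~(i) and $\|\sigma\|_{C^1([0,\tau];\Ls\infty)}$ in~(ii) are controlled uniformly (shrinking the neighborhood if necessary so that $P_\theta\tilde w|_{t=0}$ stays bounded away from $0$, using that $\tilde w_0$ is near $P_\theta\Lss\infty_-$ and the initial datum dominates the short-time behavior). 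Finally, the operator-matrix in~(\ref{Eqn.MainSys}) differs from the identity-plus-compact structure by terms whose norms are governed by $\overline\mu_{\rm a}$ and by $\|\sigma\|$, $\|\Sop\|$; inverting it via a Neumann series when $\overline\mu_{\rm a}$ and the compact perturbations are small gives an inverse bounded uniformly, hence a uniform $C$ in~(\ref{Eqn.Stability2}).

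The main obstacle I anticipate is the genericity hypothesis interacting with uniformity: the set $\mathcal U$ is merely open and dense, and a priori the inverse of~(\ref{Eqn.MainSys}) could blow up as $\sigma$ approaches the boundary of $\mathcal U$, even while staying inside it. The theorem sidesteps this by only claiming uniformity in $(\mu_{\rm a},\tilde\mu_{\rm a},w_0,\tilde w_0)$ while keeping the relevant solution \emph{in} $\mathcal U$ — so I would phrase the argument as: for each fixed $\sigma_\ast\in\mathcal U$ there is a neighborhood in data-space on which $C$ is uniform, and the hypothesis that $w$ or $\tilde w$ belongs to $\mathcal U$ supplies such a $\sigma_\ast$. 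The delicate point is then checking that perturbing $(\tilde\mu_{\rm a},\tilde w_0)$ keeps $\tilde w$ inside $\mathcal U$ (or that the constant degrades continuously), which follows once $\mathcal U$ is verified to be open in the $C^1([0,\tau];\Ls\infty)$-topology that the forward map is continuous into. A secondary technical nuisance is the $H^1$-in-time regularity of the measurement: one must differentiate the direct problem in $t$ as indicated before Definition~\ref{Def.InvProb} and check that the Lipschitz dependence of $\sigma$ on the data also holds for $\dot\sigma$, which the $C^1$-in-time well-posedness provides.
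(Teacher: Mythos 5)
Your proposal follows essentially the same route as the paper: reduce to the linear problem of Theorem \ref{Thm.MainInv} for $u=w-\tilde w$ with $\sigma=\tilde w$, then obtain uniformity of $C$ from the locally Lipschitz dependence of $\sigma$, $\Cop$, $\Sop$ --- and hence of the Fredholm operator (\ref{Eqn.FredForm}) and its inverse --- on $(\mu_{\rm a},\tilde\mu_{\rm a},\tilde w_0)$; the paper derives this dependence from regular perturbation theory of semigroups rather than Gr\"onwall estimates, which is a cosmetic difference. One caveat: the Neumann-series remark in your second paragraph is misleading, since the compact blocks $(P_{\theta}P_{t}\sigma\Sop)^{*}$ and $(P_{\theta}P_{t}\dot\sigma\Sop)^{*}$ are not small in norm and the invertibility of (\ref{Eqn.MainSys}) comes from the genericity hypothesis $\sigma\in\mathcal U$, not from smallness of the perturbation; however, your third paragraph's fallback --- openness of invertibility around a fixed $\sigma_{*}\in\mathcal U$ combined with continuity of the data-to-operator map --- is precisely the argument the paper uses, so the proof as finally assembled is correct.
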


In physical terms, the above theorem says that we can recover the absorption coefficient from a \textit{single boundary measurement} if the following conditions are met. First, the initial state of radiation is isotropic (but not necessarily known), that is, we have illuminated the region $\Omega$ during time $t < 0$ as to produce a diffuse state at time $t=0$.
Second, the region $\Omega$ is properly illuminated by the initial state so that radiative particles visit every point in the domain of interest. And third, the medium is weakly absorbing. Notice that the initial state does not have to be fully known a-priori, that is, we do not assume $u_{0} = w |_{t=0} - \tilde{w} |_{t=0} = 0$. In other words, we may uniquely identify the optical absorption even if we use an \textit{unknown}, but sufficiently strong and diffuse, illuminating field. 

The smallness of $\mu_{\rm a}$ is only a technical condition which guarantees the invertibility of (\ref{Eqn.MainSys}). We are not satisfied with this condition because we do not find it to have a physical meaning or relevance. We expect the constant of stability to be very large for strongly absorbing media, but retain the solvability of the inverse problem. Unfortunately, we have not found a way to bypass this assumption at this point.


\section{Proof of the main results} \label{Section:InverseProblem}

In this section, we proceed to prove Theorems \ref{Thm.MainInv}-\ref{Thm.MainInv2}. We reduce the linear inverse problem \ref{Def.InvProb} to an equation of Fredholm form. First, in order to simplify the notation, we introduce the following transport operators $A, A^{*} : \Wss \to \Ws$ given by
\begin{eqnarray}
&& A u = (\theta \cdot \nabla) u + \mu_{\rm a} u + \mu_{\rm s} (I - \Kop) u  \label{Eqn.1200} \\
&& A^{*} \psi =  - (\theta \cdot \nabla) \psi + \mu_{\rm a} \psi + \mu_{\rm s} (I - \Kop^{*}) \psi  \label{Eqn.1202}
\end{eqnarray}
which behave as formal adjoints of each other with respect to the $\Ws$-inner-product.

To make things simple, we momentarily suppose that input data such $\phi$ and $\eta$ in the adjoint (\ref{Eqn.001c})-(\ref{Eqn.003c}) are sufficiently smooth leading to a strong solution $\psi$. Following the usual density arguments, we would take limits and use the continuity of appropriate operators to extend the meaning of the main equations to less regular data. In what follows, we will evaluate the duality pairing between the terms in equation $(\ref{Eqn.0005})$ against $\psi$ and $\dot{\psi}$ to obtain a system of two equations. The system will then be shown to have Fredholm form which is the main strategy of this paper. 

\begin{proof}[Proof of Theorem \ref{Thm.MainInv}]
Let $m = \Lambda(f,u_{0}) \in H^{1}([0,\tau];\Ts_{+})$ and $\sigma_{0} = \sigma |_{t=0}$, and consider
\begin{eqnarray*}
\la \sigma f , \psi \ra_{L^{2}([0,\tau];\Ws)} &=& \la \dot{u} + A u , \psi \ra_{L^{2}([0,\tau];\Ws)}  \\ 
&=& \la u(\tau) , \psi(\tau) \ra_{\Ws} - \la u(0) , \psi(0) \ra_{\Ws} - \la u , \dot{\psi} - A^{*} \psi \ra_{L^{2}([0,\tau];\Ws)} \\
&& \qquad \qquad + \la (\nu \cdot \theta) u , \psi \ra_{L^{2}([0,\tau]; \partial \Omega \times \Sph)} \\ 
&=& - \la u_{0} , \phi \ra_{\Ws} + \la m , \Cop \phi \ra_{L^{2}([0,\tau];\Ts_{+})} 
\end{eqnarray*}
where the boundary term appeared from use of the Green's identity (\ref{Eqn.Green}). The above identity holds for all $\phi \in \Ws$, but we use it only for $\phi \in L^{2}(\Omega)$ because we are considering isotropic source term $f$ and isotropic initial condition $u_{0}$. Hence, we obtain
\begin{eqnarray}
u_{0} + (P_{\theta} P_{t} \sigma \Sop)^{*} f  = P_{\theta} \, \Cop^{*} m, \label{Eqn.Main1}
\end{eqnarray}
where we view $\sigma : L^{2}([0,\tau];\Ws) \to L^{2}([0,\tau];\Ws)$ as a pointwise multiplicative operator mapping $v \mapsto \sigma v$.

Now we proceed to derive a second equation. Consider,
\begin{eqnarray*}
\la \sigma f , \dot{\psi} \ra_{L^{2}([0,\tau];\Ws)} &=& \la \dot{u} + A u , \dot{\psi} \ra_{L^{2}([0,\tau];\Ws)}  \\ 
&=& \la u(\tau) , \dot{\psi}(\tau) \ra_{\Ws} - \la u(0) , \dot{\psi}(0) \ra_{\Ws} - \la u , \ddot{\psi} - A^{*} \dot{\psi} \ra_{L^{2}([0,\tau];\Ws)} \\
&& \qquad \qquad + \la (\nu \cdot \theta) u , \dot{\psi} \ra_{L^{2}([0,\tau]; \partial \Omega \times \Sph)}
\end{eqnarray*}
leading to
\begin{eqnarray*}
\la \sigma_{0} f , \phi \ra_{\Ws} + \la \dot{\sigma} f , \psi \ra_{L^{2}([0,\tau];\Ws)} + \la u_{0} , A^{*} \phi \ra_{\Ws} = \la \dot{m} , \Cop \phi \ra_{L^{2}([0,\tau];\Ts_{+})} 
\end{eqnarray*} 
valid for all sufficiently smooth $\phi \in \Ws$. But again we restrict to all smooth $\phi \in L^{2}(\Omega)$ to obtain,
\begin{eqnarray}
\Big[ (P_{\theta} \sigma_{0}) + (P_{\theta} P_{t} \dot{\sigma} \Sop)^{*} \Big] f + \mu_{\rm a} u_{0}  = P_{\theta} \, \Cop^{*} \dot{m}. \label{Eqn.Main2}
\end{eqnarray}
Here again, the choice of isotropic functions $f$ and $u_{0}$ leads to an advantageous structure for the above equation. In particular, the action of the angular-averaging operator $P_{\theta}$ renders desired compactness (see lemma \ref{Lemma.005} below) as well as the following fact already employed to obtain (\ref{Eqn.Main2}). If $u_{0} , \phi \in L^{2}(\Omega)$ are sufficiently smooth then 
\begin{eqnarray*}
\la u_{0} , A^{*} \phi \ra_{\Ws} = |\Sph| \la u_{0} , P_{\theta} A^{*} \phi \ra_{L^{2}(\Omega)} = |\Sph| \la u_{0} , \mu_{\rm a} \phi \ra_{L^{2}(\Omega)}  .
\end{eqnarray*} 
The last equality is due to $P_{\theta} (\theta \cdot \nabla) \phi = 0$ when $\phi$ is independent of $\theta$, and $P_{\theta}(I-\Kop^{*}) = 0$ due to the conservative nature of the scattering operator $\Kop$. We emphasize that the above equality is a subtle but crucial fact employed in the proof of theorem \ref{Thm.MainInv}.

Equations (\ref{Eqn.Main1}) and (\ref{Eqn.Main2}) constitute the focus of this paper. We already expressed them in operator-valued matrix notation in (\ref{Eqn.MainSys}). Notice that the governing operator of the system (\ref{Eqn.MainSys}) can be expressed as follows,
\begin{eqnarray}
\left[
\begin{array}{cc}
(P_{\theta} \sigma_{0}) & \mu_{\rm a} \\ 
0 & I
\end{array} \right] + 
\left[
\begin{array}{cc}
(P_{\theta} P_{t} \dot{\sigma} \Sop)^{*} & 0 \\ 
(P_{\theta} P_{t} \sigma \Sop)^{*} & 0
\end{array} \right], \label{Eqn.FredForm}
\end{eqnarray}
where the first term is boundedly invertible on $L^{2}(\Omega) \times L^{2}(\Omega)$ provided that $|(P_{\theta} \sigma_{0})| \geq \delta > 0$ and the second term is a compact operator on $L^{2}(\Omega) \times L^{2}(\Omega)$ as asserted by lemma \ref{Lemma.005} below. Hence, we obtain a Fredholm system. 

Now we prove the existence of an open and dense set for $\sigma \in C^{1}([0,\tau];\Ls{\infty}) \cap (i)$ on which (\ref{Eqn.FredForm}) is boundedly invertible. First, standard perturbation shows that the set of $\sigma$'s over which the (\ref{Eqn.FredForm}) is invertible in $L^{2}(\Omega) \times L^{2}(\Omega)$ is open. To show denseness, consider replacing $\sigma$ with
\begin{eqnarray*}
\rho(\lambda) = \lambda \sigma + (1-\lambda) \sigma_{0}.
\end{eqnarray*}
Now notice that the first term in (\ref{Eqn.FredForm}) remains unchanged for any choice of $\lambda \in \mathbb{C}$, and the second term remains compact and analytic with respect to $\lambda \in \mathbb{C}$. If we set $\lambda = 0$, then the governing operator (\ref{Eqn.FredForm}) becomes
\begin{eqnarray*}
\left[
\begin{array}{cc}
(P_{\theta} \sigma_{0}) & \mu_{\rm a} \\ 
(P_{\theta} P_{t} \sigma_{0} \Sop)^{*} & I
\end{array} \right]
\end{eqnarray*}
which is boundedly invertible provided that $\mu_{\rm a}$ is sufficiently small. By the analytic Fredholm theorem \cite{Ren-Rog-2004}, then the system is boundedly invertible for all but a discrete set of $\lambda$'s. In particular, this holds for values arbitrarily close to $\lambda=1$. This shows the desired denseness. The other claims of theorem \ref{Thm.MainInv} are well-known consequences of Fredholm-Riesz-Schauder theory.
\end{proof}

Before going into lemma \ref{Lemma.005}, we wish to make some remarks. Notice that if $f$ and $u_{0}$ were not isotropic, then we would have gotten the following governing operator 
\begin{eqnarray*}
\left[
\begin{array}{cc}
\sigma_{0} + (P_{t} \dot{\sigma} \Sop)^{*} & A \\ 
( P_{t} \sigma \Sop)^{*} & I
\end{array} \right].
\end{eqnarray*}
However, this operator does not have a favorable form. In other words, it is the isotropy of both $f$ and $u_{0}$ what leads to the replacement of $A$ by $\mu_{a}$, and to the appearance of the angular-averaging operator $P_{\theta}$ which renders the needed compactness. If only one of the unknowns $(f,u_{0})$ is assumed isotropic, we do not obtain a favorable structure either as the reader can easily check. In practical applications it is usually acceptable to assume $f$ to be independent of $\theta \in \Sph$. However, assuming that $u_{0}$ is isotropic constitutes the most restrictive condition needed for our approach to work.

Now we proceed to prove a lemma already employed in the proof of Theorem \ref{Thm.MainInv}.

\begin{lemma} \label{Lemma.005}
If $\sigma \in C^{1}([0,\tau];\Ls{\infty})$ then both $P_{\theta} P_{t} \sigma \Sop : L^{2}(\Omega) \to L^{2}(\Omega)$ and $P_{\theta} P_{t} \dot{\sigma} \Sop : L^{2}(\Omega) \to L^{2}(\Omega)$ are compact operators.
\end{lemma}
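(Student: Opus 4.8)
The plan is to show compactness by factoring each operator through the compact angular-averaging map $P_{\theta} : \Wss \to L^{2}(\Omega)$, which is compact thanks to the velocity averaging lemmas of \cite{Gol-Lio-Per-Sen-1988}. So the key is to verify that the composition $P_{t}\,\sigma\,\Sop$ (respectively $P_{t}\,\dot\sigma\,\Sop$), viewed as a map from $L^{2}(\Omega)$ into $\Wss = \Lss{2}$, is bounded; then pre-composing with the inclusion and post-composing with the compact $P_{\theta}$ yields the claim. Concretely: $\Sop : \Ws \to L^{2}([0,\tau];\Ws)$ is bounded by Theorem \ref{Thm.MainControl2} and the definition of the solution operator, but we need more — that $\Sop\phi$ actually lands in $L^{2}([0,\tau];\Wss)$, i.e. that the solution $\psi$ of the adjoint problem (\ref{Eqn.001c})-(\ref{Eqn.003c}) has an $L^{2}$-in-time directional derivative. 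This is exactly the extra regularity encoded in the $\Wss$-norm (\ref{Eqn.148}), and it follows from the equation itself: $\theta\cdot\nabla\psi = -\dot\psi + \mu_{\rm a}\psi + \mu_{\rm s}(I-\Kop^{*})\psi$, whose right-hand side is controlled in $L^{2}([0,\tau];\Ws)$ once one also has $\dot\psi \in L^{2}([0,\tau];\Ws)$; the boundary contribution in (\ref{Eqn.148}) is controlled because $\gamma_{+}\psi = \Cop\phi \in L^{2}([0,\tau];\Ts_{+})$ and $\gamma_{-}\psi = 0$. I would isolate these mapping properties as the content of the argument.

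The steps, in order, are as follows. First, establish that $\Sop$ maps $\Ws$ boundedly into $L^{2}([0,\tau];\Wss)$: use that $\psi = \Sop\phi$ solves the adjoint transport equation with data $\eta = \Cop\phi$ and zero final condition, invoke the stability estimate of Theorem \ref{Thm.MainControl2} to bound $\|\eta\|_{L^{2}([0,\tau];\Ts_{+})}$, bound $\|\psi\|_{L^{2}([0,\tau];\Ws)}$ by well-posedness, and then read off $\theta\cdot\nabla\psi$ from the PDE to recover the full $\Wss$-norm in time. Second, observe that multiplication by $\sigma(t)\in\Ls{\infty}$ (resp. $\dot\sigma(t)\in\Ls{\infty}$), uniformly in $t\in[0,\tau]$ since $\sigma\in C^{1}([0,\tau];\Ls{\infty})$, is a bounded operator on $L^{2}([0,\tau];\Ws)$ — though one must be slightly careful, since multiplying by a merely $L^{\infty}$ function need not preserve $\Wss$, as it destroys the directional-derivative bound. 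Third, compose with the time-integral $P_{t} : L^{2}([0,\tau];\Ws)\to\Ws$, which is bounded; and finally with $P_{\theta}$. Because of the subtlety in the second step, the clean way to organize the composition is: $P_{\theta}P_{t}\sigma\Sop = P_{\theta}\circ P_{t}\circ M_{\sigma}\circ\Sop$ where only $\Sop$ need land in the higher-regularity space, while $P_{\theta}$ needs a $\Wss$-argument — so the right bookkeeping is to push the velocity-averaging compactness onto $\Sop$ directly, i.e. write $P_{\theta}P_{t}\sigma\Sop$ by first swapping: since $\sigma$ is multiplicative in $(x,\theta)$ and $P_{t}$ integrates in $t$, and $P_{\theta}$ averages in $\theta$, one must argue that $P_{\theta}P_{t}(\sigma\psi) = \int_{0}^{\tau} P_{\theta}(\sigma(t,\cdot)\psi(t,\cdot))\,dt$ and then bound $P_{\theta}(\sigma(t)\psi(t))$ using that $\psi(t)\in\Wss$ for a.e. $t$ with $L^{2}$-in-$t$ norm, plus the compact embedding.

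The main obstacle, and the point that needs the most care, is precisely reconciling the multiplication by the rough coefficient $\sigma$ with the compactness mechanism: compactness of $P_{\theta}$ requires a $\Wss$ (i.e. directional-derivative) bound on its argument, but $\sigma\psi$ need not inherit a $\Wss$ bound from $\psi$ when $\sigma$ is only $L^{\infty}$ in space. The resolution is that $P_{\theta}$ composed with multiplication still inherits compactness from the averaging lemma applied to $\psi$ alone plus the boundedness of multiplication on $\Ws$: more precisely, one shows that the map $\psi \mapsto \int_{0}^{\tau} P_{\theta}(\sigma(t)\psi(t))\,dt$ is compact from $L^{2}([0,\tau];\Wss)$ to $L^{2}(\Omega)$ by approximating $\sigma$ in a suitable sense by smooth coefficients $\sigma_{k}$ (for which $\sigma_{k}\psi(t)\in\Wss$ and the averaging lemma applies directly, giving compactness), and then passing to the limit: $M_{\sigma_{k}}\to M_{\sigma}$ in operator norm on $L^{2}([0,\tau];\Ws)$, and since $L^{2}([0,\tau];\Wss)\hookrightarrow L^{2}([0,\tau];\Ws)$ boundedly, $P_{\theta}P_{t}\sigma_{k}\Sop \to P_{\theta}P_{t}\sigma\Sop$ in operator norm; a norm-limit of compact operators is compact. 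The same argument applies verbatim to $\dot\sigma\in C([0,\tau];\Ls{\infty})$. I would therefore structure the proof as: (1) $\Sop$ bounded into $L^{2}([0,\tau];\Wss)$; (2) compactness of the $\sigma$-weighted averaging operator on $L^{2}([0,\tau];\Wss)$ via smooth approximation and the velocity averaging lemma; (3) conclude by composition. I expect step (2), and in particular the density/approximation argument for $\sigma$, to be the delicate part, with step (1) being a routine energy-type estimate using the adjoint equation.
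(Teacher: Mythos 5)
There is a genuine gap at your step (1). You claim that $\Sop$ maps $L^{2}(\Omega)$ boundedly into $L^{2}([0,\tau];\Wss)$, and you propose to obtain the directional derivative by reading $\theta\cdot\nabla\psi = -\dot\psi + \mu_{\rm a}\psi + \mu_{\rm s}(I-\Kop^{*})\psi$ off the equation, ``once one also has $\dot\psi\in L^{2}([0,\tau];\Ws)$.'' That hypothesis is not available and does not follow from the equation: $\psi=\Sop\phi$ is only a \emph{mild} solution driven by the outflow control $\eta=\Cop\phi$, which Theorem \ref{Thm.MainControl2} provides merely in $L^{2}([0,\tau];\Ts_{+})$, with no time regularity. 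The transport structure controls only the combination $\dot\psi+\theta\cdot\nabla\psi$; splitting it into the two separate terms would require $H^{1}$-in-time boundary data (think of free transport, where $\psi$ is the boundary datum propagated along characteristics, so $\theta\cdot\nabla\psi$ costs a time derivative of $\eta$). So $\psi(t)\notin\Wss$ in general, and your step (2), which takes $L^{2}([0,\tau];\Wss)$ as its domain, inherits the same problem.

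The paper's proof resolves exactly this point by applying $P_{t}$ \emph{before} seeking spatial regularity: for smooth $\sigma$, the product $\varrho=\sigma\psi$ satisfies a transport equation whose time integral $\varphi=P_{t}\varrho$ solves a \emph{stationary} adjoint problem, because the $\partial_{t}$ term integrates to the endpoint value $\sigma(0)\psi(0)$ (using $\psi(\tau)=0$). Well-posedness of the stationary problem then gives $\varphi\in\Wss$ with $\|\varphi\|_{\Wss}\leq C\|\eta\|_{L^{2}([0,\tau];\Ts_{+})}$, even though no individual time slice of $\sigma\psi$ lies in $\Wss$; only after this does the averaging lemma give $P_{\theta}\varphi\in H^{1/2}(\Omega)$ and hence compactness. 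Your remaining ingredients --- the velocity-averaging/compact-embedding mechanism, and the passage from smooth $\sigma$ to $\sigma\in C^{1}([0,\tau];\Ls{\infty})$ by operator-norm approximation and closedness of the compact operators --- do match the paper, but the argument cannot be completed without the time-integration step that converts the evolution problem into a stationary one.
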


\begin{proof}
First, we consider $\sigma \in C^{\infty}([0,\tau] \times \clo{\Omega} \times \Sph)$, keeping in mind that this condition will be relaxed later.

Let $\phi \in L^{2}(\Omega)$ and $\eta = \Cop \phi$ and $\psi = \Sop \phi$. We proceed with a density argument by having $\{ \eta^{\epsilon} \}_{\epsilon > 0} \subset C([0,\tau];\Ts_{+})$ be a family of functions such that $\eta^{\epsilon}(\tau) = 0$ and $\eta^{\epsilon} \to \eta$ in the norm of $L^{2}([0,\tau];\Ts_{+})$ as $\epsilon \to 0$. Let also $\psi^{\epsilon} \in C^{1}([0,\tau]; \Ws) \cap C([0,\tau]; \Wss)$ be the unique strong solution of (\ref{Eqn.001c})-(\ref{Eqn.003c}) with $\eta^{\epsilon}$ as the prescribed outflow boundary condition. Notice that $\psi^{\epsilon} \to \psi$ in the norm of $C([0,\tau];\Ws)$ because (\ref{Eqn.001c})-(\ref{Eqn.003c}) is well-posed in a mild sense.

Now, let $\varrho^{\epsilon} = \sigma \psi^{\epsilon} \in C^{1}([0,\tau]; \Ws) \cap C([0,\tau]; \Wss)$. Notice that $\varrho^{\epsilon}$ satisfies (in a strong sense) the following problem,
\begin{eqnarray*}
\dot{\varrho}^{\epsilon} + (\theta \cdot \nabla) \varrho^{\epsilon} = F^{\epsilon} \quad &\text{in $[0,\tau] \times (\Omega \times \Sph)$},  \\
\varrho^{\epsilon} = 0 \quad &\text{on $\{ t = \tau \} \times (\Omega \times \Sph)$},  \\
\gamma_{+} \varrho^{\epsilon} = G^{\epsilon} \quad &\text{on $[0,\tau] \times (\partial \Omega \times \Sph)_{+}$}. 
\end{eqnarray*}
where $G^{\epsilon} = \gamma_{+} (\sigma \psi^{\epsilon}) \in C([0,\tau];\Ts_{+})$ and $F^{\epsilon} = \psi^{\epsilon}( \dot{\sigma} + (\theta \cdot \nabla) \sigma ) + \sigma ( \mu_{\rm a} + \mu_{\rm s}(I-\Kop^{*})) \psi^{\epsilon} \in C([0,\tau];\Ws)$. 

Hence, the time-integral $\varphi^{\epsilon} = P_{t} \varrho^{\epsilon}$ satisfies a stationary problem of the form
\begin{eqnarray*}
(\theta \cdot \nabla) \varphi^{\epsilon} = P_{t} F^{\epsilon}  + \sigma(0) \psi^{\epsilon}(0) \quad &\text{in $ (\Omega \times \Sph)$},  \\
\gamma_{+} \varphi = P_{t} G^{\epsilon} \quad &\text{on $(\partial \Omega \times \Sph)_{+}$}. 
\end{eqnarray*}
The latter is a well-posed stationary adjoint problem (see for instance \cite{Agoshkov-1998,Egg-Sch-2014}) with prescribed outflow condition $P_{t} G^{\epsilon} \in \Ts_{+}$ and forcing term $P_{t}F^{\epsilon} + \sigma(0) \psi^{\epsilon}(0) \in \Ws$. The solution $\varphi^{\epsilon} \in \Wss$ depends continuously on the input data in the appropriate norms. Therefore,
\begin{eqnarray*}
\| \varphi^{\epsilon} \|_{\Wss} \leq C \left( \| \psi^{\epsilon} \|_{C([0,\tau];\Ws)} + \| \eta^{\epsilon} \|_{L^{2}([0,\tau];\Ts_{+})} \right) \leq \tilde{C} \| \eta^{\epsilon} \|_{L^{2}([0,\tau];\Ts_{+})},
\end{eqnarray*}
for all $\epsilon > 0$ where $\eta^{\epsilon} \to \eta = \Cop \phi$ in the norm of $L^{2}([0,\tau];\Ts_{+})$. This in turn implies that the mapping $ \phi \mapsto P_{t} \sigma \Sop \phi$ extends as a bounded operator from $L^{2}(\Omega)$ to $\Wss$. 

Finally, from well-known averaging lemmas \cite{Gol-Lio-Per-Sen-1988}, we obtain that $P_{\theta} P_{t} \sigma \Sop : L^{2}(\Omega) \to H^{1/2}(\Omega)$ is bounded. Our claim follows due to the compact Sobolev embedding of $H^{1/2}(\Omega)$ into $L^{2}(\Omega)$. The above proof is valid for a sufficiently smooth $\sigma$. However, it is straightforward to show that 
\begin{eqnarray*}
\| P_{\theta} P_{t} (\sigma_{1} - \sigma_{2}) \Sop \|_{L^2(\Omega) \to L^{2}(\Omega)} \leq C \| \sigma_{1} - \sigma_{2} \|_{C([0,\tau];\Ls{\infty})}.
\end{eqnarray*}
Since the subspace of compact operators is closed, then we conclude that $P_{\theta} P_{t} \sigma \Sop : L^{2}(\Omega) \to L^{2}(\Omega)$ is compact for $\sigma \in C([0,\tau];\Ls{\infty})$.

The proof of compactness for $P_{\theta} P_{t} \dot{\sigma} \Sop : L^{2}(\Omega) \to L^{2}(\Omega)$ is the same due to our assumption that $\sigma \in C^{1}([0,\tau];\Ls{\infty})$.
\end{proof}

Now we prove our second main result.

\begin{proof}[Proof of Theorem \ref{Thm.MainInv2}]
Suppose, without loss of generality, that $\tilde{w}$ belongs to the generic set $\mathcal{U}$ defined in Theorem \ref{Thm.MainInv}. Also assume that both $w_{0}$ and $\tilde{w}_{0}$ are isotropic. For sufficiently small $\mu_{\rm a} \in L^{\infty}(\Omega)$ we obtain the estimate (\ref{Eqn.Stability}). It only remains to prove the local uniformity of the constant $C$ in this estimate. Recall that the operator (\ref{Eqn.FredForm}) and its inverse depend on $\sigma$ and $\Sop$ which in turn depend on $\tilde{w}_{0}$, $\tilde{\mu}_{\rm a}$ and $\mu_{\rm a}$.

First, we address perturbations of $\sigma$ within the space $C^{1}([0,\tau];\Ls{\infty})$. From the definition in (\ref{Eqn.Diff}), we see that $\sigma$ depends on the evolution operator (c$_0$-semigroup) for the RTE with absorption $\tilde{\mu}_{\rm a}$. Notice that $\tilde{\mu}_{\rm a}$ is a coefficient for a lower-order term in the RTE. Hence, from regular perturbation theory of semigroups in Banach spaces (see \cite[Ch. 3]{Eng-Nag-2000} and \cite[Ch. 9]{Kato-1995}), we obtain that the semigroup operator for the RTE is 
locally Lipschitz continuous with respect to $\tilde{\mu}_{\rm a} \in L^{\infty}(\Omega)$. In particular, we have local Lipschitz continuity of $\sigma \in C^{1}([0,\tau];\Ls{\infty})$ with respect to $\tilde{\mu}_{\rm a} \in L^{\infty}(\Omega)$. Due to linearity and stability of the semigroup, we also have Lipschitz continuity of $\sigma \in C^{1}([0,\tau];\Ls{\infty})$ with respect to its initial condition $\tilde{w}_{0} \in P_{\theta} \Lss{\infty}_{-}$. 

Similarly, the operator $\Sop : \Ws \to L^{2}([0,\tau] ; \Ws)$ is locally Lipschitz continuous with respect to $\mu_{\rm a} \in L^{\infty}(\Omega)$. This follows from the manner in which the control operator $\Cop : \Ws \to L^{2}([0,\tau]; \Ts_{+})$ is constructed. The operator $\Cop$ is given by the bounded inverse of a certain composition of semigroups (see \cite{Aco-2013} for details). In turn, as in the previous paragraph, such semigroups are locally Lipschitz continuous with respect to $\mu_{\rm a} \in L^{\infty}(\Omega)$. Hence, by well-known perturbation arguments \cite{Kato-1995}, we obtain both the locally Lipschitz continuity of $\Cop$ and the invariance of the steering time $\tau$ under small perturbations of the absorption $\mu_{\rm a} \in L^{\infty}(\Omega)$. 

Finally, we have the operator (\ref{Eqn.FredForm}) being locally Lipschitz continuous with respect to perturbations of $\mu_{\rm a}$ and $\tilde{\mu}_{\rm a}$ in $L^{\infty}(\Omega)$, and $\tilde{w}_{0}$ in $P_{\theta} \Lss{\infty}_{-}$. Therefore the same is true for its inverse \cite{Kato-1995}. Let $F : L^{2}(\Omega)^2 \to L^{2}(\Omega)^2$ be the linear operator (\ref{Eqn.FredForm}) and denote its nonlinear dependence on the absorption coefficients and initial condition as follows $F=F[\mu_{\rm a}, \tilde{\mu}_{\rm a},\tilde{w}_{0}]$. Then there exists $\epsilon_{1} > 0$ and $L > 0$ such that
\begin{eqnarray*}
\| F^{-1}[\mu_{\rm a}, \tilde{\mu}_{\rm a} , \tilde{w}_{0}] \| \leq L \left( \| \mu_{a}\|_{L^{\infty}(\Omega)} + \| \tilde{\mu}_{a}\|_{L^{\infty}(\Omega)} \right) + \| F^{-1}[0,0,\tilde{w}_{0}] \|
\end{eqnarray*}
for all $\mu_{\rm a}, \tilde{\mu}_{\rm a} \in L^{\infty}(\Omega)$ satisfying $\left( \| \mu_{a}\|_{L^{\infty}(\Omega)} + \| \tilde{\mu}_{a}\|_{L^{\infty}(\Omega)} \right) < \epsilon_{1}$. 
Now, the dependence of $\sigma$ on $\tilde{w}_{0}$ is linear which implies that $F$ is also linearly and stably dependent on $\tilde{w}_{0} \in P_{\theta} \Lss{\infty}_{-}$. This leads to an additive bounded perturbation of an invertible operator. By well-known arguments \cite{Kato-1995}, there exists $\hat{w}_{0} \in P_{\theta} \Lss{\infty}_{-}$ and $\epsilon_{2} > 0$ such that
\begin{eqnarray*}
 \| F^{-1}[0,0,\tilde{w}_{0}] \| \leq 2  \| F^{-1}[0,0,\hat{w}_{0}] \|
\end{eqnarray*}
for all $\tilde{w}_{0} \in P_{\theta} \Lss{\infty}_{-}$ such that $\| \tilde{w}_{0} - \hat{w}_{0} \|_{\Lss{\infty}} < \epsilon_{2}$. Hence, the constant of stability in (\ref{Eqn.Stability}) can be chosen to be $C = \epsilon_{1} L + 2 \| F^{-1}[0,0,\hat{w}_{0}] \|$ which is independent of $\mu_{\rm a}$ and $\tilde{\mu}_{\rm a}$ in a small neighborhood of zero in $L^{\infty}(\Omega)$, and independent of $w_{0}$ and $\tilde{w}_{0}$ in a small neighborhood of $\hat{w}_{0}$ in $P_{\theta} \Lss{\infty}_{-}$. This provides the local uniformity of the stability constant $C$ in (\ref{Eqn.Stability2}) which concludes the proof. 
\end{proof}


\section*{Acknowledgments}
The author would like to thank Ricardo Alonso for fruitful discussions and the referees for constructive recommendations. This work was partially supported by the AFSOR grant FA9550-12-1-0117 and the ONR grant N00014-12-1-0256.


\bibliographystyle{AIMS}
\bibliography{Biblio}

\medskip
Received xxxx 20xx; revised xxxx 20xx.
\medskip

\end{document}